\documentclass{aims}
\usepackage{amsmath}
  \usepackage{paralist}
  \usepackage{graphics} 
  \usepackage{epsfig} 
\usepackage{graphicx}  \usepackage{epstopdf}

  \textheight=8.2 true in
   \textwidth=5.0 true in
    \topmargin 30pt
     \setcounter{page}{1}



\newtheorem{theorem}{Theorem}[section]

\newtheorem{lemma}[theorem]{Lemma}
\newtheorem{proposition}{Proposition}

\theoremstyle{definition}
\newtheorem{definition}[theorem]{Definition}
\newtheorem{remark}{Remark}

\newcommand{\R}{\mathbb R} 
\newcommand{\N}{\mathbb{N}} 
\DeclareMathOperator*\ess{ess\,sup}


\title[Well-posedness for degenerate parabolic equations] 
      {Well-posedness for a class of nonlinear degenerate parabolic equations}

\author[Giuseppe Floridia]{}

\subjclass{Primary: 35K58, 
35K65, 
35K45
; Secondary: 35K55,
35K61
.}

\keywords{Semilinear equations, degenerate parabolic equations, weighted Sobolev spaces.}

 \email{floridia.giuseppe@icloud.com}

\thanks{The research of the author was carried out in the frame of Programme STAR, financially supported by UniNA and Compagnia di San Paolo.\\
This work was supported by the ÒInstituto Nazionale di Alta MatematicaÓ (INdAM), through the GNAMPA Research Project 2014: 
\lq\lq Controllo moltiplicativo per modelli diffusivi nonlineari'' (coordinator G. Floridia). 
Moreover, this research was 
performed in the framework of the GDRE CONEDP (European Research Group on \lq\lq Control of Partial Differential Equations'') issued by CNRS, INdAM and Universit\'e de Provence.
}

\begin{document}
\maketitle

\centerline{\scshape Giuseppe Floridia }
\medskip
{\footnotesize
 \centerline{
 University of Naples \lq\lq Federico II''
}
   \centerline{Department of Mathematics and Applications \lq\lq R. Caccioppoli'' 
%
   }
        \centerline{Via Cintia, Monte S. Angelo}
        \centerline{I-80126 Napoli, Italy}
} 



\bigskip


\begin{abstract}
In this paper we obtain well-posedness for a class of semilinear weakly degenerate reaction-diffusion systems with Robin boundary conditions. 
This result is obtained through a Gagliardo-Nirenberg interpolation inequality and some embedding results for weighted Sobolev spaces.
\end{abstract}
\section{Introduction}

In this work we study well-posedness of semilinear parabolic systems in one space dimension 
of the form
\begin{equation}
\label{Psemilineare}
\left\{\begin{array}{l}
\displaystyle{u_t-(a(x) u_x)_x =\alpha(t,x)u+ f(t,x,u)\,\quad \mbox{ in } \; Q_T \,:=\,(0,T)\times(-1,1) }\\ [2.5ex]
\displaystyle{
\begin{cases}
\begin{cases}
\beta_0 u(t,-1)+\beta_1 a(-1)u_x(t,-1)= 0 \quad \;\:t\in (0,T)\, \\
\qquad\qquad\qquad\qquad\qquad\qquad\qquad\qquad\qquad\qquad\quad\quad(\mbox{for }\, WDP)\\
\gamma_0\, u(t,1)\,+\,\gamma_1\, a(1)\,u_x(t,1)= 0
 \qquad\quad\, t\in (0,T)\,
\end{cases}
\\
\quad a(x)u_x(t,x)|_{x=\pm 1} = 0\,\,\qquad\qquad\qquad\;\;\,\, t\in(0,T)\;\;\quad(\mbox{for }\, SDP)
\end{cases}
}\\ [2.5ex]
\displaystyle{u(0,x)=u_0 (x) \,\qquad\qquad\qquad\qquad\quad\qquad\qquad\;\; \,x\in(-1,1)}~.
\end{array}\right.
\end{equation}
The equation in the \textit{Cauchy}
problem 
above is a degenerate parabolic equation because the diffusion coefficient $a\,(a\in C^0([-1,1]$)), positive on $(-1,1),$ 
vanishes at the extreme points of $[-1,1]$.
Furthermore, two kinds of degenerate diffusion coefficient can be distinguished. $(\ref{Psemilineare})$ is a weakly degenerate problem $(WDP)$ (see \cite{CF2} and \cite{PhGF}) if the diffusion coefficient
$a\in C^1(-1,1)$ and $\frac{1}{a}\in L^1(-1,1),$ while the problem $(\ref{Psemilineare})$ is called a strongly degenerate problem $(SDP)$ (see \cite{F1}, \cite{CF1} and \cite{PhGF}) if the diffusion coefficient 
$a \in C^1([-1,1]),$ consequently $\frac{1}{a}\not\in L^1(-1,1)$.\\
Some physical motivations for the study of degenerate parabolic problems 
come
 from mathematical models in climate science (see, e.g., \cite{F1
}).

\subsection{Problem formulation}
We consider the problem (\ref{Psemilineare})
under the following assumptions:
 \begin{enumerate}
  \item[(A.1)] $u_0 \in L^2(-1,1);$ 
 \item[(A.2)] $\alpha \in L^\infty (Q_T);$ 
\item[(A.3)] $f:Q_T\times\R\rightarrow \R$ is such that
\begin{itemize}
\item $(t,x,u)\longmapsto f(t,x,u)$ is a Carath\'eodory function on $Q_T\times\R,$
(\footnote{ We say that $f:Q_T\times\R\rightarrow \R$ is a Carath\'eodory function on $Q_T\times\R$ if the following properties hold:
\begin{itemize}
\item$(t,x)\longmapsto f(t,x,u)$ is measurable, for every $ u\in\R,$
 \item$u\longmapsto f(t,x,u)$ is continuous, for a.e. $(t,x)\in Q_T.$
 \end{itemize}
 })
\item
$t\longmapsto f(t,x,u)$ is loc. absolutely continuous for a.e. $x\in(-1,1),\forall u\in\R,$
\item there exist constants $\gamma_0\geq 0, \vartheta\in[1,\vartheta_{\sup})$ and $\nu
\geq0$
 such that
\begin{equation}\label{Superlinearit}
|f(t,x,u)|\leq\gamma_0\,|u|^\vartheta, \mbox{ for a.e. } (t,x)\in Q_T, \forall u\in \R\,,
\end{equation}
\begin{equation}\label{Remf}
\big(f(t,x,u)-f(t,x,v)\big)(u-v)\leq\nu
(u-v)^2, \; \mbox{ for a.e. } (t,x)\in Q_T, \forall u,v\in \R, 
\end{equation}
\begin{equation}\label{Remlip}
\big|f(t,x,u)-f(t,x,v)\big|\leq
\nu
(1+|u|^{\vartheta-1}\!\!\!+|v|^{\vartheta-1})|u-v|,\,\mbox{ for a.e.}(t,x)\!\in\! Q_T, \forall u,v\in \R,
 \end{equation}
%
\begin{equation*}
  f_t(t,x,u)\,u\geq-\nu\, 
  u^2,\; \mbox{ 
 for a.e.
 } \,(t,x)\in Q_T, \forall u\in \R;\,
\end{equation*}
\end{itemize}
\item[(A.4)] $a \in C^0([-1,1])$ is such that
    $$a(x)>0, \,\, \forall \, x \in (-1,1),\quad a(-1)=a(1)=0;$$
    \item[$({A.5}_{WD})$] 
  \begin{itemize}
  \item[$\star$] 
$\vartheta_{\sup}=4,$ 
\item[$\star$] 
 $a \in C^1(-1,1)$ is such that $\frac{1}{a}\in L^1(-1,1),$
\item[$\star$] 
   $\beta_0,\beta_1,\gamma_0,\gamma_1\in\R,\;\beta_0^2+\beta_1^2>0, \;\gamma_0^2+\gamma_1^2>0,$ satisfy the sign condition
     $\beta_0\beta_1 \leq 0 \mbox{ and } \gamma_0\gamma_1 \geq 0;$
\end{itemize}
  \item[$({A.5}_{SD})$] 
  \begin{itemize}
  \item[$\star$] 
  $\vartheta_{\sup}=3,$ 
       \item[$\star$] $a \in C^1([-1,1])$ is such that
   the function $\xi_a(x):=
   \displaystyle\int_0^x \frac{1}{a(s)}ds 
    \in L^{q_\vartheta}(-1,1),$
  where 
    $ q_\vartheta=\max\Big\{\frac{1+\vartheta}{3-\vartheta}, 2\vartheta-1\Big\}.$
  \end{itemize}
\end{enumerate}

\begin{remark}
The following is an example of function $f$ that satisfies the assumption $(A.3)\!:$ 
$\;\;\,f(t,x,u)=c(t,x)\min\{|u|^{\vartheta-1},1\}u-|u|^{\vartheta-1}u,$
where $c$ 
is a Lipschitz continuous function. 
\end{remark}
\subsection{Main results}
In this work, we are interested in the following existence and uniqueness result.
\begin{theorem}[main theorem]\label{main}
For each $u_0\in L^2(-1,1)$ there exists a unique strong solution (\footnote{See Definition \ref{strong}, for the precise definition of strong solutions.}) 
of the $WDP$ \eqref{Psemilineare} under the assumptions $(A.1)-(A.4)$ and $({A.5}_{WD})$. 
\end{theorem}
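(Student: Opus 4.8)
The plan is to recast \eqref{Psemilineare} as an abstract semilinear Cauchy problem in $H:=L^2(-1,1)$, treating the degenerate linear part by analytic-semigroup theory and the nonlinearity by a fixed-point plus energy-estimate scheme. First I would set up the functional framework: introduce the weighted space $H^1_a(-1,1)=\{u\in L^2(-1,1):u\text{ is loc. abs. cont. and }\sqrt a\,u_x\in L^2(-1,1)\}$ with $\|u\|_{H^1_a}^2=\|u\|_{L^2}^2+\|\sqrt a\,u_x\|_{L^2}^2$, and the bilinear form obtained by pairing $-(au_x)_x$ with a test function. The hypothesis $1/a\in L^1(-1,1)$ is what makes the weakly degenerate case tractable: since then $1/\sqrt a\in L^2$, Cauchy--Schwarz gives $\int_{-1}^1|u_x|\le\|1/\sqrt a\|_{L^2}\,\|\sqrt a\,u_x\|_{L^2}<\infty$, so every $u\in H^1_a$ is absolutely continuous on $[-1,1]$, whence $H^1_a\hookrightarrow C^0([-1,1])\hookrightarrow L^\infty$ and the injection $H^1_a\hookrightarrow L^2$ is compact. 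In particular the pointwise values $u(\pm1)$ are well defined and the Robin conditions are meaningful.

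Integrating by parts and inserting the Robin conditions, the boundary contributions to the form become $\tfrac{\gamma_0}{\gamma_1}\,u(1)^2-\tfrac{\beta_0}{\beta_1}\,u(-1)^2$ (with the evident Dirichlet/Neumann specializations when $\gamma_1=0$ or $\beta_1=0$). Here the sign conditions $\gamma_0\gamma_1\ge0$ and $\beta_0\beta_1\le0$ from $(A.5_{WD})$ enter decisively: they force both boundary terms to be nonnegative, so that, together with $\int_{-1}^1 a\,u_x^2$, the form is coercive on $H^1_a$ up to a lower-order $\|u\|_{L^2}^2$. Consequently the associated self-adjoint operator $A$ (whose domain encodes the boundary conditions) is bounded below and $-A$ generates an analytic $C^0$-semigroup $(e^{-tA})_{t\ge0}$ on $L^2(-1,1)$; this yields well-posedness of the linear problem and the variational setting $L^2(0,T;H^1_a)\cap H^1(0,T;(H^1_a)')$ in which the strong solution will live.

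To control the nonlinearity I would establish a weighted Gagliardo--Nirenberg interpolation inequality, of the type $\|u\|_{L^\infty}\le C\,\|\sqrt a\,u_x\|_{L^2}^{\lambda}\,\|u\|_{L^2}^{1-\lambda}+C\,\|u\|_{L^2}$ for a suitable $\lambda\in(0,1)$ fixed by the weight, and combine it with the growth bound \eqref{Superlinearit}, which gives $\int_{-1}^1|f(t,x,u)|\,|u|\le\gamma_0\,\|u\|_{L^\infty}^{\vartheta-1}\,\|u\|_{L^2}^2$. The bound $\vartheta<\vartheta_{\sup}=4$ is exactly the range for which the power of $\|\sqrt a\,u_x\|_{L^2}$ generated by this term stays below $2$, so that a Young inequality absorbs it into the diffusion term at the cost of a time-integrable remainder; the threshold is higher than in the strongly degenerate case precisely because here $H^1_a$ embeds into $C^0$. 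Local existence then follows from a Banach fixed point applied to the mild formulation $u(t)=e^{-tA}u_0+\int_0^t e^{-(t-s)A}\big(\alpha(s)u(s)+f(s,\cdot,u(s))\big)\,ds$ in $C([0,\tau];L^2)\cap L^2(0,\tau;H^1_a)$, the contraction being supplied by the local Lipschitz estimate \eqref{Remlip} and the embedding $H^1_a\hookrightarrow L^\infty$. Testing the equation with $u$ and using the interpolation inequality produces an a priori bound on $\sup_{[0,T]}\|u(t)\|_{L^2}$ that rules out finite-time blow-up and globalizes the solution, while testing with $u_t$ (exploiting \eqref{Remf} and $f_t\,u\ge-\nu u^2$) upgrades the regularity to that of a strong solution in the sense of Definition \ref{strong}.

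Uniqueness is the cleanest step and relies on the one-sided condition \eqref{Remf}. For two strong solutions $u,v$ the difference $w=u-v$ solves the linear equation with right-hand side $\alpha w+(f(t,x,u)-f(t,x,v))$; testing with $w$ and using \eqref{Remf} gives
\[
\tfrac12\tfrac{d}{dt}\|w\|_{L^2}^2+\|\sqrt a\,w_x\|_{L^2}^2\le\big(\|\alpha\|_{L^\infty}+\nu\big)\,\|w\|_{L^2}^2,
\]
and since $w(0)=0$, Gronwall's lemma forces $w\equiv0$. I expect the principal obstacle to be the rigorous weighted Gagliardo--Nirenberg inequality together with the absorption argument at the critical growth $\vartheta<4$: one must identify the correct interpolation exponent induced by the degenerate weight and verify that the nonlinear term produces a power of $\|\sqrt a\,u_x\|_{L^2}$ strictly below $2$, so that it can be moved to the left-hand side while leaving only a Gronwall-integrable remainder. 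A secondary technical difficulty is keeping the boundary trace terms under control uniformly throughout the degenerate-weight estimates.
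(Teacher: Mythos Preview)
Your overall toolkit (weighted spaces, $H^1_a\hookrightarrow L^\infty$ via $1/a\in L^1$, a weighted Gagliardo--Nirenberg inequality, a fixed point for local existence, energy estimates, Gronwall for uniqueness) is exactly the right one, and it matches the paper's Section~\ref{WSE} and Proposition~\ref{loclip}. But the architecture of your proof differs from the paper's in a way that creates a real gap.

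The paper does \emph{not} run the fixed point directly for $u_0\in L^2$. It first proves existence and uniqueness of \emph{strict} solutions $u\in\mathcal H(Q_T)$ for initial data $u_0\in H^1_a(-1,1)$ (Theorem~\ref{exB}, via fixed point), then proves the stability estimate $\|u-v\|_{\mathcal B(Q_T)}\le C_T\|u_0-v_0\|_{L^2}$ (Proposition~\ref{uni}), and only then handles $u_0\in L^2$ by picking $u_k^0\in H^1_a$ with $u_k^0\to u_0$ in $L^2$, letting $u_k$ be the corresponding strict solutions, and passing to the limit in $\mathcal B(Q_T)$. The limit is the strong solution \emph{by definition}: Definition~\ref{strong} says a strong solution is precisely a $\mathcal B(Q_T)$-limit of strict solutions of the approximating problems. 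Your step ``testing with $u_t$ \dots\ upgrades the regularity to that of a strong solution in the sense of Definition~\ref{strong}'' is therefore a misreading: for $u_0\in L^2$ one cannot expect $u_t\in L^2(Q_T)$ or $u\in\mathcal H(Q_T)$, and no regularity upgrade is needed---what is needed is exactly the density/stability argument you omit.

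A second, smaller discrepancy concerns how the threshold $\vartheta_{\sup}=4$ appears. In the paper it comes from the space--time embedding $\mathcal B(Q_T)\hookrightarrow L^5(Q_T)$ (Lemma~\ref{sob2}, obtained from the interpolation inequality of Lemma~\ref{GNl2} with $q=1$), which makes the Nemytskii map $\phi:\mathcal B(Q_T)\to L^{1+1/\vartheta}(Q_T)$ locally Lipschitz for $\vartheta+1\le 5$ (Proposition~\ref{loclip}); this is what feeds the fixed point for strict solutions. Your alternative mechanism---bounding $\int|f||u|\le\gamma_0\|u\|_{L^\infty}^{\vartheta-1}\|u\|_{L^2}^2$ and absorbing via Young---is plausible heuristically but you have not pinned down the interpolation exponent, and in any case it is not how the paper closes the argument. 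If you want to follow your route, you should (i) keep the fixed point at the $H^1_a$-data level where $u_t\in L^2$ is available, (ii) prove the $\mathcal B$-stability estimate as in Proposition~\ref{uni}, and (iii) obtain the $L^2$-data case by approximation, which then literally matches Definition~\ref{strong}.
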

\begin{remark}
We note that Theorem \ref{main} holds also for the
$SDP$ \eqref{Psemilineare} under the assumptions $(A.1)-(A.4)$ and $({A.5}_{SD})$, with weighted Neumann boundary conditions. This result has already been obtained by the author in \cite{F1}.\\ 
\end{remark}
\subsection{Structure of this paper}
The main theorem, Theorem \ref{main}, is proved 
in Section \ref{WD}, with general Robin boundary conditions, applying the new Gagliado-Nirenberg interpolation inequalities and the embedding results for weighted Sobolev spaces obtained in Section \ref{WSE} (references to \lq\lq{\it interpolation inequalities}'' can be found in \cite{
BR}, \cite{D}, \cite{FR3} and \cite{FR2}).
 In Section \ref{C}, we show an application to the global approximate multiplicative controllability for system \eqref{Psemilineare} and we present some perspectives for this kind of controllability.

\section{Interpolation inequalities and embedding results for weighted Sobolev spaces}\label{WSE}
In this section, first we introduce some weighted Sobolev spaces and we obtain a Gagliardo-Nirenberg interpolation inequality, then we prove embedding results for spaces involving time.
\subsection{The function spaces $H^1_a(-1,1)$ and $H^2_a(-1,1)$}
In order to deal with the well-posedness of nonlinear 
$WDP$ $(\ref{Psemilineare})$, it is necessary to introduce the weighted Sobolev spaces $H^1_a(-1,1)$ and $H^2_a(-1,1)$ (see also \cite{CF2} and \cite{PhGF}).\\
We define
$$H^1_a(-1,1):=\{u\in L^2(-1,1)|\,u \in AC_{loc}([-1,1])
\text{ and }\;\sqrt{a}\,u_x\in L^2(-1,1)\},\,(\footnote{By $AC_{loc}([-1,1])$ we denote the space of the locally absolutely continuous functions on $[-1,1].$
})$$
$$H^2_a(-1,1):=\{u\in H^1_a(-1,1)| \, au_x \in H^1 (-1,1)\},$$ 
respectively, with the following norms
$$\|u\|_{1,a}^2:=\|u\|_{L^2(-1,1)}^2+|u|_{1,a}^2 \mbox{ and } \|u\|_{2,a}^2:=\|u\|_{1,a}^2+\|(au_x)_x\|^2_{L^2(-1,1)},$$
where $|u|_{1,a}^2:=\|\sqrt{a}u_x\|_{L^2(-1,1)}^2$ is a seminorm.\\
$H^1_a(-1,1)$
and $H^2_a(-1,1)$ are Hilbert spaces with their natural scalar products. 
\noindent In the following, we will sometimes use $\|\cdot\|
$ 
 instead of
$\|\cdot\|_{L^2(-1,1)}, 
$ 
and $\|\cdot\|_\infty$ instead of $\|\cdot\|_{L^\infty(Q_T)}.$  
\subsection{The operator $(A,D(A))$}
In this work we consider
the operator $(A,D(A))$
defined by
\vspace{-0.2cm}
 \begin{equation}\label{D(A)}
   \left\{\begin{array}{l}
\displaystyle{D(A)= \left\{u\in H^2_a (-1,1)\bigg\rvert
\begin{cases}
\beta_0 u(-1)+\beta_1 a(-1)u_x(-1)= 0 
\\
\gamma_0\, u(1)\,+\,\gamma_1\, a(1)\,u_x(1)= 0 
\end{cases}
 \right\}}\\ [4.5ex]
\displaystyle{A\,u=(au_x)_x+\alpha\,u, \,
\,\, \forall \,u \in D(A)}\,,
\end{array}\right.
 \end{equation}
where $\alpha\in L^\infty (-1,1).$ 
In \cite{CF2} we showed that $A$ is a closed, self-adjoint, dissipative operator with dense domain in $L^2 (-1,1)$.
Therefore, $A$ is the infinitesimal generator of a $C_0-\mbox{semigroup}$ of contractions in $L^2 (-1,1)$.

\subsection{Interpolation inequalities and embedding results for 
the space $H^1_a$}
First, we obtain the following
\begin{lemma}\label{sob1}
For every $p\geq 1,$ it follows that
\begin{equation*}
H^{1}_a (-1,1)\hookrightarrow L^{p}(-1,1)\,.
\end{equation*}
Moreover, there exists a positive constant $c, \, c=c(p),$ such that
$$\|u\|_{L^{p}(-1,1)}\leq c\,
\|u\|_{1,a},\;\;\forall u\in H^1_a(-1,1).$$
\end{lemma}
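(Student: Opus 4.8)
The plan is to prove the stronger uniform bound $\|u\|_{L^\infty(-1,1)} \leq c\,\|u\|_{1,a}$ and then deduce the stated $L^p$ estimate at once: since $(-1,1)$ has finite length, $\|u\|_{L^{p}(-1,1)} \leq 2^{1/p}\,\|u\|_{L^\infty(-1,1)}$ for every finite $p \geq 1$, so the embedding constant may be taken as $c(p) = 2^{1/p}\,c$.

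The key structural fact I would exploit is that, although $a$ degenerates at $\pm 1$, the weak-degeneracy assumption $1/a \in L^1(-1,1)$ forces the derivative of $u$ to remain integrable up to the boundary. Concretely, for $u \in H^1_a(-1,1)$ we have $u \in AC_{loc}([-1,1])$, so for any $x,y \in (-1,1)$ the fundamental theorem of calculus gives $u(x)-u(y) = \int_y^x u_x(s)\,ds$. Factoring $u_x = a^{-1/2}\,(\sqrt{a}\,u_x)$ and applying Cauchy--Schwarz yields
\[
\left|\int_y^x u_x(s)\,ds\right|
\leq \left(\int_{-1}^1 \frac{1}{a(s)}\,ds\right)^{\!1/2}
\left(\int_{-1}^1 a(s)\,u_x(s)^2\,ds\right)^{\!1/2}
= \left\|\tfrac{1}{a}\right\|_{L^1(-1,1)}^{1/2} |u|_{1,a}.
\]
This bound is independent of $x$ and $y$, which in particular shows that $u$ extends continuously to the closed interval $[-1,1]$.

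To turn the control of oscillations into a pointwise bound, I would average the resulting inequality $|u(x)| \leq |u(y)| + \|1/a\|_{L^1}^{1/2}\,|u|_{1,a}$ over $y \in (-1,1)$. Dividing by the length $2$ and bounding the mean of $|u|$ by Cauchy--Schwarz gives, for every $x$,
\[
|u(x)| \leq \frac12\int_{-1}^1 |u(y)|\,dy + \left\|\tfrac{1}{a}\right\|_{L^1}^{1/2}|u|_{1,a}
\leq \frac{1}{\sqrt 2}\,\|u\|_{L^2(-1,1)} + \left\|\tfrac{1}{a}\right\|_{L^1}^{1/2}|u|_{1,a}.
\]
Taking the supremum over $x$ and using $\|u\|_{L^2(-1,1)} \leq \|u\|_{1,a}$ together with $|u|_{1,a} \leq \|u\|_{1,a}$ absorbs both terms, yielding $\|u\|_{L^\infty(-1,1)} \leq c\,\|u\|_{1,a}$ with $c = \tfrac{1}{\sqrt 2} + \|1/a\|_{L^1(-1,1)}^{1/2}$, depending only on $a$.

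The one point demanding care is the continuous extension of $u$ to the endpoints, and this is precisely where the hypothesis $1/a \in L^1(-1,1)$ of $({A.5}_{WD})$ is indispensable: it is what keeps the Cauchy--Schwarz bound finite as $x \to \pm 1$. Beyond this I anticipate no real obstacle, the remaining steps being the elementary estimates indicated above. In the strongly degenerate regime $1/a \notin L^1$ this argument breaks down and one cannot expect membership in $L^p$ for all $p$; the embedding then relies instead on the finer integrability of $\xi_a$ postulated in $({A.5}_{SD})$.
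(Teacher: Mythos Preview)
Your argument is correct and rests on the same key device as the paper: factor $u_x=a^{-1/2}(\sqrt{a}\,u_x)$ and use Cauchy--Schwarz together with $1/a\in L^1(-1,1)$ to bound the oscillation $|u(x)-u(y)|\leq\|1/a\|_{L^1}^{1/2}|u|_{1,a}$, then absorb a pointwise value via the $L^2$ norm. The only organizational difference is that you go through the stronger $L^\infty$ bound first (averaging over $y$) and read off $L^p$ from the finite measure of $(-1,1)$, whereas the paper anchors at the single point $y=0$, bounds $|u(0)|$ by the same trick, and then estimates $\int_{-1}^1|u|^p\,dx$ directly via $|u(x)|^p\leq 2^{p-1}(|u(x)-u(0)|^p+|u(0)|^p)$; the $L^\infty$ embedding is deferred to a separate Gagliardo--Nirenberg lemma. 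Your route is slightly more economical here, while the paper's layout keeps the $L^\infty$ statement paired with the interpolation inequality it is used for later.
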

\begin{proof}
It is sufficient to prove Lemma \ref{sob1} for every $p\geq2.$ Let $u\in H^1_a(-1,1)$ and $p\geq2.$ 
Firstly, since $\frac{1}{a}\in L^1(-1,1),$ for every $x\in(-1,1),$ by H\"older's inequality we have the following estimate
\begin{equation}\label{S1.2}
\!|u(x)-u(0)|= \left|\int_0^x u'(s)\, ds\right|\,
\leq
\left|\int_0^x a(s)|u'(s)|^2 ds\right|^\frac{1}{2}\!\left|\int_0^x \frac{1}{a(s)} ds\right|^\frac{1}{2}\!\!\!\leq
k\,|u|_{1,a},
\end{equation}
where $k=\left|\int_{-1}^1 \frac{1}{a(s)} ds\right|^\frac{1}{2}.$
Furthermore, 
we obtain
\begin{equation}\label{S1.3}
\!\!|u(0)|\leq\int_{-1}^1|u(0)|\,dx\leq\int_{-1}^1|u(x)-u(0)|\,dx+\int_{-1}^1|u(x)|\,dx
\leq 2\,k\,|u|_{1,a}
+\sqrt{2}\|u\|.
\end{equation}
Finally, from (\ref{S1.2}) and (\ref{S1.3}) we deduce 
\begin{multline*}
\int_{-1}^1 |u(x)|^{p}\,dx\leq 
2^{p-1}
\int_{-1}^1 \left(|u(x)-u(0)|^{p}+|u(0)|^{p}\right)\,dx\\
\leq 
2^{p}
k^p\,|u|^{p}_{1,a}+
2^{2p}\Big(\max\Big\{k,\frac{\sqrt{2}}{2}\Big\}\Big)^{p}\|u\|^{p}_{1,a}\leq 2^{2p}\Big(\max\Big\{k,\frac{\sqrt{2}}{2}\Big\}\Big)^{p}\|u\|^{p}_{1,a}.
\end{multline*}

\end{proof}


Now, we prove the following $L^\infty$ weighted Gagliardo-Nirenberg interpolation inequality.
\begin{lemma}\label{GNl1}
\begin{equation*}
H^{1}_a (-1,1)\hookrightarrow L^{\infty}(-1,1),\,
\end{equation*}
moreover, for every $q\geq\frac{1}{2}$ there exists a positive constant $c, \, c=c(q),$ such that
\begin{equation}\label{GN1}
\|u\|_{L^{\infty}(-1,1)}\leq c\,
\|u\|^\alpha_{1,a}\,\|u\|^{1-\alpha}_{L^{2q}(-1,1)},\;\;\forall u\in H^1_a(-1,1),
\end{equation}
where $\alpha=\frac{2}{2+q}.$ 
\end{lemma}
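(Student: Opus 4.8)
The plan is to prove the two assertions separately, since the embedding $H^1_a(-1,1)\hookrightarrow L^\infty(-1,1)$ is essentially free while the quantitative inequality \eqref{GN1} carries the real content. For the embedding I would let $p\to\infty$ in Lemma \ref{sob1}: the constant obtained there, $4\max\{k,\tfrac{\sqrt2}{2}\}$, does not depend on $p$, so $\|u\|_{L^\infty(-1,1)}=\lim_{p\to\infty}\|u\|_{L^p(-1,1)}\le 4\max\{k,\tfrac{\sqrt2}{2}\}\,\|u\|_{1,a}$. Equivalently, one reads it directly from \eqref{S1.2}--\eqref{S1.3}, which give $|u(x)|\le|u(0)|+k\,|u|_{1,a}\le(3k+\sqrt2)\,\|u\|_{1,a}$ for every $x\in[-1,1]$.

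For \eqref{GN1} I would work at a point $x_0\in[-1,1]$ realizing $M:=\|u\|_{L^\infty(-1,1)}=|u(x_0)|$ (it exists because $u\in AC_{loc}([-1,1])$ is continuous up to $\pm1$). First I choose a base point $\bar x$ at which $u$ is $L^{2q}$-small: since the average of $|u|^{2q}$ over $(-1,1)$ is $\tfrac12\|u\|_{L^{2q}(-1,1)}^{2q}$, there is $\bar x$ with $|u(\bar x)|\le 2^{-1/(2q)}\|u\|_{L^{2q}(-1,1)}$. Writing $|u(x_0)|^{s}=|u(\bar x)|^{s}+s\int_{\bar x}^{x_0}|u|^{s-1}\,\mathrm{sgn}(u)\,u'\,dt$ for a power $s$ to be fixed, and estimating the integral by the weighted Cauchy--Schwarz inequality as in \eqref{S1.2} --- here $\tfrac1a\in L^1(-1,1)$ is used decisively --- one gets $\int_{-1}^1|u|^{s-1}|u'|\,dt\le k\,M^{s-1}\,|u|_{1,a}$, and hence an inequality of the form $M^{s}\le 2^{-s/(2q)}\|u\|_{L^{2q}}^{s}+s\,k\,M^{s-1}\,|u|_{1,a}$.

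The crux, which I expect to be the main obstacle, is to upgrade this to the genuinely multiplicative form \eqref{GN1} with the sharp exponent $\alpha=\tfrac{2}{2+q}$: the steps above, after taking $s=2q$ and applying Young's inequality, only deliver the additive bound $\|u\|_\infty\lesssim\|u\|_{1,a}+\|u\|_{L^{2q}}$, which is strictly weaker than the product $\|u\|_{1,a}^{\alpha}\|u\|_{L^{2q}}^{1-\alpha}$. To recover the scaling I would instead pass to the variable $y=\xi_a(x)=\int_0^x\tfrac{ds}{a(s)}$, which flattens the seminorm, $|u|_{1,a}^2=\int_J |v_y|^2\,dy$ with $v(y)=u(x(y))$ on the bounded interval $J=\xi_a((-1,1))$, and apply the classical one-dimensional Gagliardo--Nirenberg inequality to $v$ on $J$; converting the resulting norms back through $dx=a\,dy$ should produce exactly the exponent $\tfrac{2}{2+q}$ and the threshold $q\ge\tfrac12$. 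The hard point is precisely this conversion: because $a$ degenerates at $\pm1$, the flat norm $\|v\|_{L^{2q}(J,dy)}$ and the weighted norm $\|u\|_{L^{2q}(-1,1)}=\|v\|_{L^{2q}(J,a\,dy)}$ differ near the endpoints, and controlling that mismatch --- using $\tfrac1a\in L^1$ together with Lemma \ref{sob1} to reabsorb the excess --- is where the degenerate structure of the problem genuinely enters. The constant $c=c(q)$ would then be tracked explicitly through $k$ and the Young/interpolation constants.
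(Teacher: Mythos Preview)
Your embedding argument is fine. For the interpolation inequality, however, neither of your two routes is complete, and the second one contains a genuine error: after the change of variable $y=\xi_a(x)$ the classical one-dimensional Gagliardo--Nirenberg inequality $\|v\|_{L^\infty(J)}\le C\|v\|_{H^1(J)}^{\theta}\|v\|_{L^{2q}(J)}^{1-\theta}$ has exponent $\theta=\frac{1}{q+1}$, not $\frac{2}{2+q}$. The two coincide only at $q=0$, so even before the measure-mismatch you correctly flag, the transplanted inequality cannot ``produce exactly the exponent $\tfrac{2}{2+q}$'' as you expect. This discrepancy is not accidental: the change of variable sends $\|u\|_{L^{2q}(-1,1)}$ to the weighted norm $\|v\|_{L^{2q}(J,a\,dy)}$, and once the weight is stripped away there is no reason the flat indices should persist.

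The paper's proof is in fact much closer to your first route, but it avoids the fatal step of replacing $|u|^{s-1}$ by $M^{s-1}$. Taking $s=\tfrac{1}{\alpha}$ and first assuming $u(0)=0$, one writes, via the auxiliary function $F(t)=|t|^{1/\alpha-1}t$,
\[
|u(x)|^{1/\alpha}=\tfrac{1}{\alpha}\int_0^x |u(s)|^{\frac{1}{\alpha}-1}\,u'(s)\,ds,
\]
and then applies a \emph{three-factor} H\"older inequality with conjugate exponents $(4,4,2)$, spending the middle exponent on the factor $|u|^{\frac{1}{\alpha}-1}$ so that $\bigl\|\,|u|^{\frac{1}{\alpha}-1}\bigr\|_{L^4}=\|u\|_{L^{2q}}^{\frac{1}{\alpha}-1}$ appears directly; the remaining two exponents absorb $\tfrac{1}{a}\in L^1$ and the weighted derivative $\sqrt{a}\,u'$. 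The identity $4(\tfrac{1}{\alpha}-1)=2q$ is precisely what pins down $\alpha=\tfrac{2}{2+q}$, and Lemma~\ref{sob1} is invoked only to ensure $u\in L^{2q}$ a priori. The general case $u(0)\neq0$ is then reduced to this one by multiplying $u$ by a smooth even cut-off $\xi_\sigma$ vanishing near the origin and letting $\sigma\downarrow0$. The idea you are missing is therefore: keep $|u|^{s-1}$ inside the integral and devote one H\"older exponent to it, rather than pulling it out as a power of the sup norm.
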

\begin{proof}
Let us consider the auxiliary function
$F:\R\rightarrow\R,\;F(t)=|t|^{\frac{1}{\alpha}-1}t, \,t\in\R.$ We note that $F^\prime(t)=\frac{1}{\alpha}|t|^{\frac{1}{\alpha}-1}, t\in \R.$
Let us start with the particular case when $u\in H^1_a(-1,1)$ and $u(0)=0.$\\
We can deduce the following equality
$$F(u(x))=\int_0^xF^\prime(u(s))u^\prime(s)\,ds,\;\;\forall x\in(-1,1),$$
then, for every $x\in(-1,1),$ 
 \begin{equation*}
 |u(x)|^{\frac{1}{\alpha}-1}u(x)=\frac{1}{\alpha}\int_0^x|u(s)|^{\frac{1}{\alpha}-1}u^\prime(s)\,ds
 =\frac{1}{\alpha}\int_0^x\frac{1}{\sqrt[4]{a(s)}}|u(s)|^{\frac{1}{\alpha}-1}\,\sqrt[4]{a(s)}\,u^\prime(s)\,ds
 .
 \end{equation*}
 Since $\frac{1}{a}\in L^1(-1,1)$ (because $a(x)$ is a weakly degenerate diffusion coefficient) and $u\in L^{4\frac{1-\alpha}{\alpha}}(-1,1)$ owing to Lemma \ref{sob1}, through generalized H\"older's inequality (with three conjugate exponents: 4,4,2), for every $x\in(-1,1),$ we have
 \begin{multline*}
 |u(x)|^{\frac{1}{\alpha}}\leq\frac{1}{\alpha}\left|\int_0^x\frac{1}{\sqrt[4]{a(s)}}|u(s)|^{\frac{1}{\alpha}-1}\,\sqrt[4]{a(s)}\,|u^\prime(s)|\,ds\right|\\
 \leq \frac{1}{\alpha}\left|\int_0^x\frac{1}{\sqrt[4]{a(s)}}|u(s)|^{\frac{1}{\alpha}-1}\,\sqrt{a(s)}\,|u^\prime(s)|\,ds\right|\\
 \leq \frac{1}{\alpha}\!\left(\int_{-1}^1\frac{1}{a(s)}\,ds\right)^\frac{1}{4}\!\!\!\left(\int_0^x\,|u(s)|^{4\frac{1-\alpha}{\alpha}}\,ds\right)^\frac{1}{4}\!\!\!\left(\int_0^x\,a(s)\,|u^\prime(s)|^2\,ds\right)^\frac{1}{2}
\!\! \!\!\!\leq\! c \|u\|^{\frac{1-\alpha}{\alpha}}_{L^{4\frac{1-\alpha}{\alpha}}
 }\!\!|u|_{1,a}.
 \end{multline*}  
 Therefore, for every $u\in H^1_a(-1,1)$ with $u(0)=0,$ we obtain
 \begin{equation}\label{GN1.1}
 \|u\|_{L^\infty(-1,1)}\leq c\|u\|^\alpha_{1,a}\|u\|^{1-\alpha}_{L^{2q}(-1,1)},
 \end{equation}
 where $q=2\frac{1-\alpha}{\alpha}.$ Then $\alpha=\frac{2}{2+q}$ and $c=c(q).$\\
 When $u\in H^1_a(-1,1) \mbox{ and } u(0)\neq0,$ we obtain the inequality \eqref{GN1} by applying the inequality \eqref{GN1.1} to the function $\xi_\sigma\,u,$ where, for every $\sigma\in\left(0,\frac{1}{2}\right),\,$
 $\xi_\sigma\in C^\infty([-1,1])$ 
 is a symmetrical cut-off function such that:\\
 \vspace{-0.5cm}
\begin{itemize}
  \item $\xi_\sigma (-x)=\xi_\sigma (x), \qquad 
   \forall x\in [-1,1];$
  \item $0\leq \xi_\sigma(x)\leq 1,  \; 
  \forall x\in [0,1];$
   $\quad\xi_\sigma (x)=0, \; 
  \forall x\in [0,\frac{\sigma}{2}];$
   $\quad\xi_\sigma (x)=1,\; 
  \forall x\in [\sigma,1].$
\end{itemize}
Indeed, $\xi_\sigma u\in H^1_a(-1,1),\,\xi_\sigma(0)u(0)=0$ and through \eqref{GN1.1} we obtain the following inequalities
\vspace{-0.3cm}
 \begin{multline*}
 \|\xi_\sigma\,u\|_{L^\infty(-1,1)}\leq c\|\xi_\sigma\,u\|^\alpha_{1,a}\|\xi_\sigma\,u\|^{1-\alpha}_{L^{2q}(-1,1)}\leq c\|u\|^\alpha_{1,a}\|u\|^{1-\alpha}_{L^{2q}(-1,1)},\;\;\forall \sigma\in\left(0,\frac{1}{2}\right),
\end{multline*}
from which the inequality \eqref{GN1} follows.
\end{proof}
Now, applying 
Lemma \ref{GNl1}, we prove the following $L^p$ weighted Gagliardo-Nirenberg interpolation inequality.
\begin{lemma}\label{GNl2}
For every $p,q\in\R$ such that $1\leq 2q<p,$ 
there exists a positive constant $c, \, c=c(p,q),$ such that
\begin{equation}\label{GN2}
\|u\|_{L^p(-1,1)}\leq c\,
\|u\|^\beta_{1,a}\,\|u\|^{1-\beta}_{L^{2q}(-1,1)},\;\;\forall u\in H^1_a(-1,1),
\end{equation}
where $\beta=\frac{2}{p}\frac{p-2q}{q+2}\;(0<\beta<1).$ 
\end{lemma}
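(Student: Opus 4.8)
The plan is to prove the $L^p$ interpolation inequality \eqref{GN2} by combining the $L^\infty$ bound of Lemma \ref{GNl1} with an elementary interpolation of $L^p$ norms between $L^{2q}$ and $L^\infty$. The key observation is that for $2q < p$ we may write
\begin{equation*}
\int_{-1}^1 |u|^p\,dx = \int_{-1}^1 |u|^{p-2q}\,|u|^{2q}\,dx \leq \|u\|_{L^\infty(-1,1)}^{p-2q}\,\|u\|_{L^{2q}(-1,1)}^{2q},
\end{equation*}
so that
\begin{equation*}
\|u\|_{L^p(-1,1)} \leq \|u\|_{L^\infty(-1,1)}^{\frac{p-2q}{p}}\,\|u\|_{L^{2q}(-1,1)}^{\frac{2q}{p}}.
\end{equation*}

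Next I would insert the estimate from Lemma \ref{GNl1}. For $q \geq \frac{1}{2}$ (which holds here since $2q \geq 1$) that lemma gives $\|u\|_{L^\infty} \leq c\,\|u\|_{1,a}^{\alpha}\,\|u\|_{L^{2q}}^{1-\alpha}$ with $\alpha = \frac{2}{2+q}$. Raising this to the power $\frac{p-2q}{p}$ and multiplying by $\|u\|_{L^{2q}}^{2q/p}$, I collect the powers of $\|u\|_{1,a}$ and of $\|u\|_{L^{2q}}$ separately. The exponent on $\|u\|_{1,a}$ becomes $\beta := \alpha\cdot\frac{p-2q}{p} = \frac{2}{2+q}\cdot\frac{p-2q}{p} = \frac{2}{p}\frac{p-2q}{q+2}$, which is exactly the claimed value; and since the $L^\infty$ inequality is homogeneous of degree one in $u$, the total exponent on the norms must sum to $1$, so the exponent on $\|u\|_{L^{2q}}$ is automatically $1-\beta$. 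This consistency check (degree-one homogeneity) is a convenient way to avoid recomputing the second exponent by hand.

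Finally I would verify the range $0 < \beta < 1$. Positivity is immediate from $p > 2q \geq 1$ and $q + 2 > 0$; for the upper bound one checks $\beta < 1$, equivalently $2(p-2q) < p(q+2)$, i.e. $2p - 4q < pq + 2p$, i.e. $0 < pq + 4q = q(p+4)$, which holds since $q > 0$. The main (and only real) obstacle is purely bookkeeping: making sure the fractional exponents combine correctly, which the homogeneity argument handles cleanly. The new constant $c = c(p,q)$ absorbs the constant $c(q)$ from Lemma \ref{GNl1} raised to the power $\frac{p-2q}{p}$.
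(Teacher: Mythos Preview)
Your proof is correct and follows essentially the same route as the paper: split $|u|^p=|u|^{p-2q}|u|^{2q}$, bound by $\|u\|_{L^\infty}^{p-2q}\|u\|_{L^{2q}}^{2q}$, and insert Lemma~\ref{GNl1} with $\alpha=\frac{2}{2+q}$ to recover $\beta=\frac{2}{p}\frac{p-2q}{q+2}$. The only cosmetic differences are that you invoke homogeneity to identify the second exponent as $1-\beta$ (the paper computes it directly as $\frac{q}{p}\frac{p+4}{q+2}$) and you explicitly verify $0<\beta<1$.
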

\begin{proof}
For every $u\in H^1_a(-1,1)$ we have
\begin{multline*}
\|u\|_{L^p(-1,1)}^p=\int_{-1}^1|u|^p\,dx=\int_{-1}^1|u|^{2q}\,|u|^{p-2q}\,dx\\
\leq \|u\|_{L^\infty(-1,1)}^{p-2q}\,\int_{-1}^1|u|^{2q}\,dx =\|u\|_{L^\infty(-1,1)}^{p-2q}\|u\|_{L^{2q}(-1,1)}^{2q}.
\end{multline*}
Applying Lemma \ref{GNl1} we deduce the following inequalities
\begin{equation*}
\|u\|_{L^p(-1,1)}^p\leq c(p,q)\|u\|^{\alpha(p-2q)}_{1,a}\|u\|^{(1-\alpha)(p-2q)}_{L^{2q}(-1,1)}\|u\|_{L^{2q}(-1,1)}^{2q}.
\end{equation*}
Bearing in mind that
$\alpha=\frac{2}{2+q},$ we obtain
$$\|u\|_{L^p(-1,1)}\leq (c(p,q))^\frac{1}{p}\|u\|^{\frac{2}{p}\frac{p-2q}{q+2}}_{1,a}\|u\|^{\frac{q}{p}\frac{p+4}{q+2}}_{L^{2q}(-1,1)}.$$
Let us set $\beta:=\frac{2}{p}\frac{p-2q}{q+2}$ and observing that $1-\beta=\frac{q}{p}\frac{p+4}{q+2},$ we conclude the proof.
\end{proof}

\subsection{Spaces involving time: ${\mathcal{B}}(Q_T)$ and ${\mathcal{H}}(Q_T)$}
Given $T>0,$ let us define the Banach spaces:
$$\mathcal{B}(Q_T):=C([0,T];L^2(-1,1))\cap L^2(0,T;H^1_a (-1,1))$$
with the norm:
$\quad
\displaystyle \|u\|^2_{\mathcal{B}(Q_T)}= \sup_{t\in
[0,T]}\|u(t,\cdot)\|^2
+2\int^T_{0}\int^1_{-1}a(x)u^2_x
dx\,dt\,,
$
$$\text{and }\;
{\mathcal{H}}(Q_T):=L^{2}(0,T;D(A)
)\cap H^{1}(0,T;L^2(-1,1))\cap C([0,T];H^{1}_a(-1,1))(\footnote{$D(A)$ is the domain of the operator defined in \eqref{D(A)}.})
$$
with the norm: 
%
$\displaystyle \|u\|^2_{\mathcal{H}(Q_T)}=
 \sup_{
 [0,T]}\left(\|u\|
 ^2+\|\sqrt{a}u_x\|
 ^2\right)+\int_0^T\left(\|u_t\|
 ^2+\|(au_x)_x\|
 ^2\right)\,dt.
 \;\;(\footnote{
 It is well known that this norm is equivalent to the Hilbert norm $$\displaystyle\||u|\|^2_{\mathcal{H}(Q_T)}=
 \int_0^T\left(\|u\|
 ^2+\|\sqrt{a}u_x\|
 ^2+\|u_t\|
 ^2+\|(au_x)_x\|
 ^2\right)\,dt.$$
 })
$
\subsection{Embedding results for the spaces ${\mathcal{B}}(Q_T)$ and ${\mathcal{H}}(Q_T)$}
Thanks to the previous weighted Gagliardo-Nirenberg interpolation inequality (Lemma \ref{GNl2}) we obtain the following embedding.
\begin{lemma}\label{sob2}
Let $T>0.$ 
We have the following embedding
\begin{equation*}
{\mathcal{B}}(Q_T)
\hookrightarrow L^{5}(Q_T)\;\;
\end{equation*}
and, 
for every $p\in[1,5],$ we obtain
$$\|u\|_{L^{p}(Q_T)}\leq c\,T^{\frac{5-p}{3p}}\|u\|_{{\mathcal{B}}(Q_T)}, \;\;\forall u\in {\mathcal{B}}(Q_T) 
,$$
where $c,\; c=c(p),$ is a positive constant.
\end{lemma}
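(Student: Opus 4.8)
The plan is to reduce the space--time estimate to the one-dimensional-in-space weighted Gagliardo--Nirenberg inequality of Lemma \ref{GNl2}, applied on each time slice $\{t\}\times(-1,1)$, and then to integrate in $t$ and use H\"older's inequality in the time variable. First I would fix $p\in(2,5]$ and apply Lemma \ref{GNl2} with the choice $2q=2$, so that for a.e.\ $t\in(0,T)$ one has $\|u(t,\cdot)\|_{L^p(-1,1)}\le c\,\|u(t,\cdot)\|_{1,a}^{\beta}\,\|u(t,\cdot)\|_{L^2(-1,1)}^{1-\beta}$ with $\beta=\tfrac{2}{p}\tfrac{p-2}{3}$. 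The key numerical observation is that $\beta p=\tfrac{2(p-2)}{3}\le 2$ precisely when $p\le 5$, which is exactly the borderline integrability in the statement; this is why $5$ is the critical exponent for the embedding.

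Next, since $\|u\|_{L^p(Q_T)}^p=\int_0^T\|u(t,\cdot)\|_{L^p(-1,1)}^p\,dt$, I would raise the slice inequality to the power $p$ and integrate in time, extracting the $C([0,T];L^2)$-part of the $\mathcal B$-norm from the factor $\|u(t,\cdot)\|_{L^2}^{(1-\beta)p}\le(\sup_{[0,T]}\|u(t,\cdot)\|)^{(1-\beta)p}$. This leaves $\int_0^T\|u(t,\cdot)\|_{1,a}^{\beta p}\,dt$, to which I apply H\"older's inequality in time with the conjugate exponents $\big(\tfrac{2}{\beta p},\tfrac{2}{2-\beta p}\big)$ (legitimate since $\beta p\le 2$). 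This produces a factor $T^{\,1-\beta p/2}$ together with $\big(\int_0^T\|u(t,\cdot)\|_{1,a}^2\,dt\big)^{\beta p/2}$. Collecting the exponents of $T$ then gives $\tfrac1p\big(1-\tfrac{\beta p}{2}\big)=\tfrac1p-\tfrac{\beta}{2}=\frac{5-p}{3p}$, which is exactly the claimed power; the remaining range $p\in[1,2]$ is handled directly, since $Q_T$ has finite measure and $L^2(Q_T)\hookrightarrow L^p(Q_T)$ by H\"older's inequality.

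The delicate point --- and the step I expect to be the main obstacle --- is controlling $\int_0^T\|u(t,\cdot)\|_{1,a}^2\,dt$ by the $\mathcal B$-norm with the \emph{correct} dependence on $T$. Since $\|u(t,\cdot)\|_{1,a}^2=\|u(t,\cdot)\|^2+\|\sqrt{a}\,u_x(t,\cdot)\|^2$, the weighted-gradient (seminorm) part integrates in time to exactly the budget $\int_0^T\!\int_{-1}^1 a\,u_x^2\,dx\,dt$ that appears in $\|u\|_{\mathcal B(Q_T)}^2$, contributing \emph{no} extra power of $T$; it is precisely this clean isolation of the $L^2(0,T;H^1_a)$ contribution, played against the $\sup_{[0,T]}\|u(t,\cdot)\|$ contribution already pulled out, that simultaneously pins down the critical exponent $5$ and the exact time weight $T^{(5-p)/(3p)}$. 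The continuity of the embedding $\mathcal B(Q_T)\hookrightarrow L^5(Q_T)$ is then just the endpoint $p=5$, where $\beta p=2$, the time-H\"older step is trivial (power $T^0$), and the estimate reduces to $\|u\|_{L^5(Q_T)}^5\le c\,\big(\sup_{[0,T]}\|u(t,\cdot)\|\big)^3\int_0^T\|u(t,\cdot)\|_{1,a}^2\,dt$.
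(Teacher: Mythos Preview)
Your proposal is correct and follows essentially the same route as the paper: apply Lemma~\ref{GNl2} on each time slice with $q=1$ so that $\beta p=\tfrac{2(p-2)}{3}$, pull out the $L^\infty(0,T;L^2)$ factor, and use H\"older in time with exponents $\tfrac{3}{p-2},\tfrac{3}{5-p}$ (equivalently your $\tfrac{2}{\beta p},\tfrac{2}{2-\beta p}$) to obtain $T^{(5-p)/3}$ on the $p$-th power. The paper treats only $p\in(2,5]$ explicitly and, like you, bounds $\int_0^T\|u\|_{1,a}^2\,dt$ by $\|u\|_{L^2(0,T;H^1_a)}^2$ and then by the $\mathcal B$-norm; the extra $L^2$-in-time contribution you flag as the delicate point is glossed over in the paper's final inequality as well.
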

\begin{remark}
In the following proof we will consider the  norms
$$\displaystyle
\|u\|^2_{L^{2}(0,T;H^{1}_a (-1,1))}\!\!:=\!
 \int_0^T\left(\|u\|^2+\|\sqrt{a}u_x\|^2\right)dt
\;\text{   and    } \|u\|^2_{L^\infty(0,T;L^2(-1,1))}\!\!:=\!\ess_{
[0,T]}\|u\|^2.
$$
\end{remark}

\begin{proof}{(of Lemma \ref{sob2}).}
  For every $u\in {\mathcal{B}}(Q_T),$ owing to the Gagliardo-Nirenberg interpolation inequality obtained in Lemma \ref{GNl2} we deduce
\begin{
equation*}
 \|u\|^p_{L^p(Q_T)}= \int_0^T \|u\|^p_{L^p(-1,1)}\,dt
\leq c\int_0^T \,
\|u\|^{\beta p}_{1,a}\,\|u\|^{(1-\beta)p}_{L^{2q}(-1,1)}\,dt,
\end{
equation*}
then, if we choose $q=1$ in Lemma \ref{GNl2}, we obtain $\beta=\frac{2}{p}\frac{p-2}{3}$ and the previous inequality becomes
\begin{equation}\label{S1}
\!\!\!\|u\|^p_{L^p(Q_T)}
\leq c\int_0^T \,
\|u\|^{\frac{2(p-2)}{3}}_{1,a}\,\|u\|^{\frac{p+4}{3}}_{L^{2}(-1,1)}\,dt
\leq
c\|u\|^{\frac{p+4}{3}}_{L^\infty(0,T;L^{2}(-1,1))}\!\!\!\int_0^T \!\!\!\!\|u\|^{\frac{2(p-2)}{3}}_{1,a}\!\!\!dt. 
\end{%
equation}
Since $u\in L^2(0,T;H^1_a(-1,1)),$ in \eqref{S1} 
$\frac{2(p-2)}{3}\leq2$ must occur,
 therefore we have found that $p\leq5.$ 
So, for $p=5$ the inequality (\ref{S1}) becomes
\begin{multline*}
\|u\|^5_{L^5(Q_T)}\leq c\|u\|^{3}_{L^\infty(0,T;L^{2}(-1,1))}\int_0^T \,\|u\|^{2}_{1,a}\,dt
\\= c\|u\|^{3}_{L^\infty(0,T;L^{2}(-1,1))}\|u\|^{2}_{L^2(0,T;H^{1}_a(-1,1))}
\leq c\|u\|^{5}_{{\mathcal{B}}(Q_T)}.
\end{%
multline*}
Moreover, for every $p\in(2,5),$ owing to \eqref{S1} and applying H\"older's inequality (with conjugate exponents: $\frac{3}{p-2}$ and $\frac{3}{5-p}$) we obtain
\begin{multline}\label{S2}
\|u\|^p_{L^p(Q_T)}\leq
c\|u\|^{\frac{p+4}{3}}_{L^\infty(0,T;L^{2}(-1,1))}\int_0^T \,\|u\|^{\frac{2(p-2)}{3}}_{1,a}\,dt\\
\leq c\|u\|^{\frac{p+4}{3}}_{L^\infty(0,T;L^{2}(-1,1))}\left(\int_0^Tdt\right)^{\frac{5-p}{3}}\left(\int_0^T \|u\|^{2}_{1,a}\,dt\right)^{\frac{p-2}{3}}\\
= c\,T^{\frac{5-p}{3}}\|u\|^{\frac{p+4}{3}}_{L^\infty(0,T;L^{2}(-1,1))}\|u\|^{2\frac{p-2}{3}}_{L^2(0,T;H^{1}_a(-1,1))}\leq c\, T^{\frac{5-p}{3}}\|u\|^{p}_{{\mathcal{B}}(Q_T)}.
\end{%
multline}

\end{proof}
The proof of the following Lemma \ref{sob3} has not been included in this paper as is similar to that of Lemma 3.5 of \cite{F1}. 
\begin{lemma}\label{sob3}
Let $T>0,\: \mbox{ for every } p\geq1,$ 
we have
\begin{equation*}
{\mathcal{H}}(Q_T)\subset L^{2p}(Q_T) \qquad{ and }
\end{equation*}
$\qquad\qquad\displaystyle
\|u\|_{L^{2p}(Q_T)}
\leq c\,
 T^{\frac{1}{2p}}\,\|u\|_{{\mathcal{H}}(Q_T)},
$
\quad where $c$ is a positive constant.
\end{lemma}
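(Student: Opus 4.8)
The plan is to exploit the fact that the $\mathcal{H}(Q_T)$-norm already controls the $H^1_a(-1,1)$-norm uniformly in time, and then to invoke the spatial embedding $H^1_a(-1,1)\hookrightarrow L^\infty(-1,1)$ of Lemma \ref{GNl1} to deduce an $L^\infty(Q_T)$ bound. Once $u$ is known to be essentially bounded on $Q_T$, the passage to $L^{2p}(Q_T)$ for any finite $p$ is purely elementary, since $Q_T$ has finite measure $2T$; the exponent $\tfrac{1}{2p}$ of $T$ in the claimed estimate is exactly what this finite-measure argument produces.

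First I would observe that, by definition of the norm on $\mathcal{H}(Q_T)$,
\begin{equation*}
\sup_{t\in[0,T]}\|u(t,\cdot)\|_{1,a}^2=\sup_{[0,T]}\left(\|u\|^2+\|\sqrt{a}u_x\|^2\right)\leq\|u\|^2_{\mathcal{H}(Q_T)},
\end{equation*}
so that $\sup_{t\in[0,T]}\|u(t,\cdot)\|_{1,a}\leq\|u\|_{\mathcal{H}(Q_T)}$. Next, for each $t\in[0,T]$ the slice $u(t,\cdot)$ belongs to $H^1_a(-1,1)$, and combining Lemma \ref{GNl1} (with, say, $q=1$) with the trivial bound $\|u(t,\cdot)\|_{L^2(-1,1)}\leq\|u(t,\cdot)\|_{1,a}$ yields
\begin{equation*}
\|u(t,\cdot)\|_{L^\infty(-1,1)}\leq c\,\|u(t,\cdot)\|_{1,a}^{2/3}\,\|u(t,\cdot)\|_{L^2(-1,1)}^{1/3}\leq c\,\|u(t,\cdot)\|_{1,a}.
\end{equation*}
Taking the supremum over $t$ then gives $\|u\|_{L^\infty(Q_T)}\leq c\,\|u\|_{\mathcal{H}(Q_T)}$, with $c$ independent of $T$.

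The final step is the crude estimate
\begin{equation*}
\|u\|_{L^{2p}(Q_T)}^{2p}=\int_0^T\!\!\int_{-1}^1|u|^{2p}\,dx\,dt\leq 2T\,\|u\|_{L^\infty(Q_T)}^{2p},
\end{equation*}
whence, taking $2p$-th roots and using $2^{1/(2p)}\leq\sqrt{2}$ for $p\geq1$,
\begin{equation*}
\|u\|_{L^{2p}(Q_T)}\leq\sqrt{2}\,T^{\frac{1}{2p}}\,\|u\|_{L^\infty(Q_T)}\leq c\,T^{\frac{1}{2p}}\,\|u\|_{\mathcal{H}(Q_T)},
\end{equation*}
which is the asserted inequality, the constant being chosen uniform in $p$.

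I do not anticipate a genuine obstacle here: the only point requiring care is that the three ingredients of the $\mathcal{H}(Q_T)$-norm are used very unevenly. In fact only the $C([0,T];H^1_a)$ component is needed, through its built-in supremum in time, while the $L^2(0,T;D(A))$ and $H^1(0,T;L^2)$ components play no role in this estimate. One should also note that the statement asserts a set inclusion $\mathcal{H}(Q_T)\subset L^{2p}(Q_T)$ rather than a continuous embedding into a single fixed target, which is natural since $2p$ ranges over all of $[2,\infty)$; the uniform $L^\infty(Q_T)$ bound is precisely what makes every such inclusion hold simultaneously.
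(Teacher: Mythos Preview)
Your argument is correct. Note that the paper does not actually include a proof of this lemma; it simply states that the argument is similar to that of Lemma~3.5 in \cite{F1}, which treats the strongly degenerate case. Your route via the $L^\infty$ embedding of Lemma~\ref{GNl1} is clean and even yields a constant $c$ independent of $p$, but it is specific to the weakly degenerate setting (it relies on $\tfrac{1}{a}\in L^1$, which fails in the SD case). A more portable argument, closer in spirit to the proof of Lemma~\ref{sob2} and presumably to what is done in \cite{F1}, would use a slice-wise spatial embedding $H^1_a(-1,1)\hookrightarrow L^{2p}(-1,1)$ (Lemma~\ref{sob1} in the present WD setting) together with $\sup_t\|u(t,\cdot)\|_{1,a}\le\|u\|_{\mathcal{H}(Q_T)}$, and then integrate in $t$; this produces the same $T^{1/(2p)}$ factor but with $c=c(p)$. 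In either approach, as you correctly observe, only the $C([0,T];H^1_a)$ component of the $\mathcal{H}(Q_T)$-norm is actually used.
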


\section{Existence and uniqueness of solutions of semilinear WDP \eqref{Psemilineare}}\label{WD}

In this section, in order to study the semilinear $WDP$ $(\ref{Psemilineare}),$ we represent it 
 in the Hilbert space $L^2(-1,1)$ as
\begin{equation}\label{NL}
 \left\{\begin{array}{l}
\displaystyle{u^\prime(t)=A\,u(t)+\phi(u)\,,\qquad  t>0 }\\ [2.5ex]
\displaystyle{u(0)=u_0\, 
}~,
\end{array}\right.
\end{equation}
where $A$ is the operator defined in (\ref{D(A)}), 
$u_0\in L^2(-1,1),$ and, for every $u\in {\mathcal{B}(Q_T)},
$ 
the Nemytskii operator associated with the $WDP\, (\ref{Psemilineare})$ is defined as
\begin{equation}\label{phimap}
\phi(u)(t,x):=f(t,x,u(t,x)),\;\;\forall (t,x)\in Q_T. 
\end{equation}
Now, 
we will deduce the following proposition.
\begin{proposition}\label{loclip}
Let $T>0, \;1\leq\vartheta\leq4.$ 
Let $f:Q_T\times\R\rightarrow \R$ be a function that satisfies assumption $(A.3),$ then 
 $\phi:{\mathcal{B}(Q_T)}\longrightarrow L^{1+\frac{1}{\vartheta}}(Q_T)$ 
is a locally Lipschitz continuous map 
 and $\phi ({\mathcal{H}(Q_T)})\subseteq L^2(Q_T).$
 \end{proposition}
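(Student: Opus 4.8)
The plan is to read off the two claims from the growth and Lipschitz bounds in assumption $(A.3)$, combined with the weighted embeddings of Lemmas \ref{sob2} and \ref{sob3}. Throughout I write $r:=1+\frac1\vartheta=\frac{\vartheta+1}{\vartheta}$, so that $r\vartheta=\vartheta+1$; the decisive structural fact is that $1\le\vartheta\le4$ forces $\vartheta+1\le5$, hence $\mathcal{B}(Q_T)\hookrightarrow L^5(Q_T)\hookrightarrow L^{\vartheta+1}(Q_T)$ via Lemma \ref{sob2}. Since $f$ is a Carath\'eodory function, $\phi(u)(t,x)=f(t,x,u(t,x))$ is measurable whenever $u$ is, so only the integrability and Lipschitz estimates remain to be checked.

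First I would verify that $\phi$ is well defined with values in $L^{r}(Q_T)$. By the growth bound \eqref{Superlinearit} one has $|\phi(u)|^{r}\le\gamma_0^{\,r}|u|^{\vartheta r}=\gamma_0^{\,r}|u|^{\vartheta+1}$, whence $\|\phi(u)\|_{L^{r}(Q_T)}^{r}\le\gamma_0^{\,r}\|u\|_{L^{\vartheta+1}(Q_T)}^{\vartheta+1}$; since $\vartheta+1\le5$, Lemma \ref{sob2} bounds the right-hand side by a constant times $\|u\|_{\mathcal{B}(Q_T)}^{\vartheta+1}$, giving $\phi(u)\in L^{r}(Q_T)$. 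The inclusion $\phi(\mathcal{H}(Q_T))\subseteq L^2(Q_T)$ is analogous but rests on Lemma \ref{sob3} instead: \eqref{Superlinearit} yields $|\phi(u)|^{2}\le\gamma_0^{2}|u|^{2\vartheta}$, so $\|\phi(u)\|_{L^{2}(Q_T)}^{2}\le\gamma_0^{2}\|u\|_{L^{2\vartheta}(Q_T)}^{2\vartheta}$, and Lemma \ref{sob3} with $p=\vartheta\ge1$ (which requires no upper bound on $\vartheta$) controls $\|u\|_{L^{2\vartheta}(Q_T)}$ by $\|u\|_{\mathcal{H}(Q_T)}$.

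For the local Lipschitz continuity I would start from \eqref{Remlip} and Minkowski's inequality in $L^{r}(Q_T)$ (legitimate since $r\ge1$), reducing matters to estimating $\big\||u|^{\vartheta-1}|u-v|\big\|_{L^{r}(Q_T)}$ (the $|v|^{\vartheta-1}$ term being symmetric, and the constant term giving merely $\|u-v\|_{L^{r}}$, controlled by Lemma \ref{sob2} with $p=r\le2$). The key computation is $r(\vartheta-1)+r=r\vartheta=\vartheta+1$, so applying H\"older's inequality to $\int_{Q_T}|u|^{r(\vartheta-1)}|u-v|^{r}$ with the conjugate exponents $\frac{\vartheta}{\vartheta-1}$ and $\vartheta$ sends both factors into $L^{\vartheta+1}(Q_T)$, yielding $\big\||u|^{\vartheta-1}|u-v|\big\|_{L^{r}}\le\|u\|_{L^{\vartheta+1}}^{\vartheta-1}\|u-v\|_{L^{\vartheta+1}}$. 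A final application of Lemma \ref{sob2} to each norm (admissible because $\vartheta+1\le5$) produces
\[
\|\phi(u)-\phi(v)\|_{L^{r}(Q_T)}\le \nu\,c(T)\big(1+\|u\|_{\mathcal{B}(Q_T)}^{\vartheta-1}+\|v\|_{\mathcal{B}(Q_T)}^{\vartheta-1}\big)\,\|u-v\|_{\mathcal{B}(Q_T)},
\]
which is exactly local Lipschitz continuity: on any ball $\{\|u\|_{\mathcal{B}(Q_T)}\le R\}$ the Lipschitz constant may be taken as $\nu\,c(T)(1+2R^{\vartheta-1})$.

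I expect the only delicate point to be the exponent bookkeeping in the H\"older step, and in particular the identity $r(\vartheta-1)+r=\vartheta+1$ together with $\vartheta\le4$, which is precisely what keeps every norm inside the admissible range $[1,5]$ of Lemma \ref{sob2}; the borderline case $\vartheta=4$ forces the exponents, while $\vartheta=1$ should be dispatched separately, since then $|u|^{\vartheta-1}\equiv1$ and \eqref{Remlip} reduces to an ordinary (global) Lipschitz bound. No cancellation or compactness is needed: the argument is a pointwise growth estimate followed by interpolation.
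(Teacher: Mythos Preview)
Your proposal is correct and follows essentially the same route as the paper: the growth bound \eqref{Superlinearit} combined with Lemma~\ref{sob2} (for $\mathcal{B}(Q_T)\hookrightarrow L^{\vartheta+1}$) and Lemma~\ref{sob3} (for $\mathcal{H}(Q_T)\hookrightarrow L^{2\vartheta}$) handle the mapping properties, and the Lipschitz estimate comes from \eqref{Remlip} together with H\"older's inequality using the conjugate pair $\big(\frac{\vartheta}{\vartheta-1},\vartheta\big)$, which lands every factor in $L^{\vartheta+1}(Q_T)$. The only cosmetic difference is that you apply Minkowski's inequality first to split the three terms of \eqref{Remlip} and then H\"older to each piece, whereas the paper raises the full right-hand side of \eqref{Remlip} to the power $r=1+\frac1\vartheta$ and applies H\"older once to the product; the exponent bookkeeping and the resulting bound are identical.
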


\begin{proof}
For every $u\in{\mathcal{B}(Q_T)},$ applying 
Lemma \ref{sob2} (with $p=\vartheta+1$), 
$u\in L^{1+
 {\vartheta}}(Q_T)$ and through (\ref{Superlinearit}) (see assumption (A.3)) 
 we obtain
\begin{
equation*}
\int_{Q_T}|f(t,x,u(t,x))|^{1+\frac{1}{\vartheta}}\,dx\,dt
\leq \gamma_0^{1+\frac{1}{\vartheta}}\int_{Q_T}|u|^{\vartheta(1+\frac{1}{\vartheta})}\,dx\,dt
\leq
k
\,T^{\frac{4-\vartheta}{3}}
\|u\|^{\vartheta+1}_{\mathcal{B}(Q_T)} 
\,<+\infty.
\end{equation*}
Furthermore, for every $u\in{\mathcal{H}(Q_T)},$ applying Lemma \ref{sob3}, 
$u\in L^{2\vartheta}(Q_T)$ and through (\ref{Superlinearit}) (see assumption (A.3)) we deduce
$$
\int_{Q_T}|f(t,x,u(t,x))|^2\,dx\,dt
\leq \gamma_0^2\int_{Q_T}|u|^{2\vartheta}\,dx\,dt\;
\leq 
k
\,T\,\|u\|^{2\vartheta}
_{\mathcal{H}(Q_T)}
\,<+\infty.
$$
Now, we prove that $\phi:{\mathcal{B}(Q_T)}\longrightarrow L^{1+\frac{1}{\vartheta}}(Q_T)$
is a locally Lipschitz continuous map.
Indeed, for every $u,v\in {\mathcal{B}(Q_T)},$ by \eqref{Remlip}, 
applying Lemma \ref{sob2},
we have

\begin{multline*}
\!\!\!\!\!\!\|\phi(u)-\phi(v)\|^{1+\frac{1}{\vartheta}}_{L^{1+\frac{1}{\vartheta}}(Q_T)}\!\!\!=\!\!\int_{Q_T}|f(t,x,u)-f(t,x,v)|^{1+\frac{1}{\vartheta}}\,dx\,dt\;\\
\!\!\leq\!
c\int_{Q_T}(1+|u|^{\frac{\vartheta^2-1}{\vartheta}}\!\!+|v|^{\frac{\vartheta^2-1}{\vartheta}})
|u-v|^{1+\frac{1}{\vartheta}}\!dx\,dt\;\,\\
\leq c\Big(\int_{Q_T}(1+|u|^{\vartheta+1}+|v|^{\vartheta+1})\,dx\,dt\Big)^{1-\frac{1}{\vartheta}}\;\Big(\int_{Q_T}|u-v|^{\vartheta+1}\,dx\,dt\Big)^{\frac{1}{\vartheta}}\\
\leq c\Big(T^{1-\frac{1}{\vartheta}}+\|u\|_{L^{\vartheta+1}(Q_T)}^{\frac{\vartheta^2-1}{\vartheta}}+\|v\|_{L^{\vartheta+1}(Q_T)}^{\frac{\vartheta^2-1}{\vartheta}}\Big)\|u-v\|_{L^{\vartheta+1}(Q_T)}^{1+\frac{1}{\vartheta}}\\
\leq c T^{\frac{4-\vartheta}{3\vartheta}}\Big(T^{1-\frac{1}{\vartheta}}+T^{\frac{(4-\vartheta)(\vartheta-1)}{3\vartheta}}\|u\|_{{\mathcal{B}}(Q_T)}^{\frac{\vartheta^2-1}{\vartheta}}+T^{\frac{(4-\vartheta)(\vartheta-1)}{3\vartheta}}\|v\|_{{\mathcal{B}}(Q_T)}^{\frac{\vartheta^2-1}{\vartheta}}\Big)\|u-v\|_{{\mathcal{B}}(Q_T)}^{1+\frac{1}{\vartheta}}\\
=c T^{\frac{2\vartheta+1}{3\vartheta}}\Big(1+T^{\frac{4-\vartheta}{3}}\|u\|_{{\mathcal{B}}(Q_T)}^{\frac{\vartheta^2-1}{\vartheta}}+T^{\frac{4-\vartheta}{3}}\|v\|_{{\mathcal{B}}(Q_T)}^{\frac{\vartheta^2-1}{\vartheta}}\Big)\|u-v\|_{{\mathcal{B}}(Q_T)}^{1+\frac{1}{\vartheta}}
.
\end{multline*}
Owing to the last inequalities, we prove that 
for every $R>0,$ there exists $K_R(T),$ a positive constant depending on $R$ and $T$ (increasing in $T$), such that 
 \begin{equation*}
 \|\phi(u)-\phi(v)\|_{L^{1+\frac{1}{\vartheta}}(Q_T)}\!\!\!\leq\!\!K_R(T)\|u-v\|_{\mathcal{B}(Q_T)},
 \end{equation*}
$\forall
 u,v\in{\mathcal{B}(Q_T)},\|u\|_{{\mathcal{B}(Q_T)}}\leq R,\,\|v\|_{{\mathcal{B}(Q_T)}}\!\!\leq R,$ from which we obtain the conclusion.
\end{proof}
\subsection{Strict solutions of the $WDP$ \eqref{Psemilineare}}
In order to continue, 
the next definition is necessary.
\begin{definition}
If 
$u_0\in H^1_a(-1,1),$ u is a \textit{strict solution} of the $WDP$ \eqref{Psemilineare}, 
if $u\in\mathcal{H}(Q_T)$ and
\begin{equation*}
\label{}
\left\{\begin{array}{l}
\displaystyle{u_t-(a(x) u_x)_x =\alpha(t,x)u+ \phi(u)\,\quad \mbox{ a.e. \, in } \;Q_T:=\,(0,T)\times(-1,1) }\\ [2.5ex]
\displaystyle{
\begin{cases}
\beta_0 u(t,-1)+\beta_1 a(-1)u_x(t,-1)= 0 \qquad\qquad a.e. \;\:t\in (0,T)\\
\gamma_0\, u(t,1)\,+\,\gamma_1\, a(1)\,u_x(t,1)= 0
\qquad\qquad\quad\;\;\, a.e. \;\:t\in (0,T)
\end{cases}
 }\;\;\\ [2.5ex]
\displaystyle{u(0,x)=u_0 (x) \,\qquad\qquad\qquad\qquad\quad\qquad\qquad\quad\; \,x\in(-1,1)}~.
\end{array}\right.(\footnote{
Since $u\in{\mathcal{H}}(Q_T)\subseteq L^2(0,T; D(A)
),$ we have $u(t,\cdot)\in D(A)
,$ for $ \text{ a.e. } t\in(0,T)$. 
Then, we deduce that the weighted Robin boundary conditions hold for almost every 
 $t\in(0,T)$.
})
\end{equation*}
\end{definition}
Now, we give the following existence and uniqueness result for strict solutions.
\begin{theorem}\label{exB}
For all $u_0\in H^1_a(-1,1)$ there exists a unique strict solution $u\in{\mathcal{H}(Q_T)}$ to $WDP$ \eqref{Psemilineare}.
\end{theorem}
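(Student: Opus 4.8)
The plan is to recast \eqref{NL} as a fixed-point problem, solve it by the contraction mapping principle in $\mathcal{H}(Q_\tau)$ for a short time $\tau$, and then extend the local solution to all of $[0,T]$ by a priori energy estimates.

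\textbf{Step 1 (linear maximal regularity).} First I would record the regularity estimate for the linear Cauchy problem governed by $A$. Since $A$ is self-adjoint, dissipative, closed and densely defined (as recalled after \eqref{D(A)}), it generates an analytic semigroup $\{e^{tA}\}_{t\ge0}$ of contractions on $L^2(-1,1)$, and $H^1_a(-1,1)$ is the natural energy space (form domain) associated with $A$. Consequently, for every $g\in L^2(Q_\tau)$ and every $u_0\in H^1_a(-1,1)$ the problem $w'=Aw+g$, $w(0)=u_0$, has a unique solution $w\in\mathcal{H}(Q_\tau)$ with
\[
\|w\|_{\mathcal{H}(Q_\tau)}\le C\big(\|u_0\|_{1,a}+\|g\|_{L^2(Q_\tau)}\big),
\]
where $C$ can be chosen independent of $\tau\in(0,T]$.

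\textbf{Step 2 (local existence).} On a closed ball $B_R=\{u\in\mathcal{H}(Q_\tau):\|u\|_{\mathcal{H}(Q_\tau)}\le R\}$, with $R$ chosen proportional to $\|u_0\|_{1,a}$, I would define $\Phi(u)=w$, where $w$ solves the linear problem of Step~1 with datum $g=\phi(u)$. By Proposition \ref{loclip} we have $\phi(\mathcal{H}(Q_\tau))\subseteq L^2(Q_\tau)$; moreover, combining \eqref{Remlip} with the embeddings $\mathcal{H}(Q_\tau)\hookrightarrow L^{2p}(Q_\tau)$ of Lemma \ref{sob3} and H\"older's inequality, $\phi$ is locally Lipschitz as a map $\mathcal{H}(Q_\tau)\to L^2(Q_\tau)$, with a Lipschitz constant on $B_R$ of the form $L_R\,\tau^{\eta}$ for some $\eta>0$ (the positive power of $\tau$ appearing exactly as in Proposition \ref{loclip}, through the extra integrability that $\vartheta<4$ provides). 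Together with the estimate of Step~1, this shows that for $\tau$ small enough, depending on $R$, the map $\Phi$ sends $B_R$ into itself and is a strict contraction. The Banach fixed-point theorem then yields a unique local strict solution $u\in\mathcal{H}(Q_\tau)$.

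\textbf{Step 3 (global existence and uniqueness).} To reach an arbitrary $T$, I would establish a priori bounds on $\|u\|_{\mathcal{H}(Q_{T'})}$ that are independent of the length $T'$ of the existence interval. Testing the equation with $u$ and using $\alpha\in L^\infty(Q_T)$ together with the one-sided bound \eqref{Remf} gives, after Gronwall, control of $\sup_t\|u(t)\|^2+\int_0^{T'}|u|_{1,a}^2\,dt$, that is, of $\|u\|_{\mathcal{B}(Q_{T'})}$. Testing with $u_t$ (and with $(au_x)_x$), and exploiting the sign condition $f_t(t,x,u)u\ge-\nu u^2$, then controls $\int_0^{T'}\big(\|u_t\|^2+\|(au_x)_x\|^2\big)\,dt$; here the superlinear contribution of $f$ is absorbed into the dissipative terms by the Gagliardo-Nirenberg inequality of Lemma \ref{sob2} (the embedding $\mathcal{B}(Q_T)\hookrightarrow L^5(Q_T)$), which is available precisely because $\vartheta<\vartheta_{\sup}=4$. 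These bounds rule out blow-up of the $\mathcal{H}$-norm in finite time, so the local solution continues up to $T$; uniqueness on the whole interval follows by applying the difference estimate, based again on \eqref{Remf} and Gronwall, to two solutions.

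The main obstacle I expect lies in Step~3: closing the a priori estimates requires the superlinear term, of growth order up to $\vartheta<4$, to be dominated by the diffusion, and this balance is delicate because the diffusion coefficient $a$ vanishes at $x=\pm1$. The weighted Gagliardo-Nirenberg inequalities of Section \ref{WSE}, and in particular the sharp exponent in the $L^5(Q_T)$ embedding, are exactly what make this absorption possible, while the structural hypotheses \eqref{Remf} and $f_t(t,x,u)u\ge-\nu u^2$ are what turn the formal energy identities into Gronwall-type differential inequalities.
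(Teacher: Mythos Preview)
Your proposal is correct and follows the same fixed-point scheme the paper indicates (the paper omits the proof, deferring to Theorem~3.15 of \cite{F1} and noting only that the argument is by a fixed point). The one point the paper flags that you leave implicit is where the Robin boundary data and the sign condition $\beta_0\beta_1\le0$, $\gamma_0\gamma_1\ge0$ of $(A.5_{WD})$ intervene: they guarantee that the boundary terms produced by integration by parts in your Step~3 energy estimates carry the favourable sign.
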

\noindent The proof of Theorem \ref{exB} has not been included in this paper as it is very lengthy and technical. However, this proof is similar to that of the existence and uniqueness of strict solutions for the $SDP$ \eqref{Psemilineare}, proved in the Theorem 3.15 of \cite{F1} (see also Appendix B of \cite{F1}, and the author's Ph.D. Thesis \cite{PhGF}).
The fundamental idea of this proof uses the \lq\lq {\it fixed point}'' argument. 
The only difference between the SD case and the WD case consists in the fact that, in the WD case, there is the need to adapt
 the Robin boundary condition, but the sign condition on the coefficients (see $({A.5}_{WD})$) assists us.

\subsection{Strong solutions of the $WDP$ \eqref{Psemilineare}}
The following notion of \lq\lq{\it strong solutions}'' 
is classical in PDE theory, see, for instance, \cite{BDDM1}, pp. 62-64 (see also \cite{F1}).
\begin{definition}\label{strong}
Let 
$u_0\in L^2(-1,1).$ We say that $u\in\mathcal{B}(Q_T)$ is a \textit{strong solution} of the WDP \eqref{Psemilineare}, if $u(0,\cdot)=u_0$ and 
there exists a 
 sequence $\{u_k\}_{k\in\N}$ in $\mathcal{H}(Q_T)$ such that, as $k\rightarrow\infty,$ $u_k\longrightarrow u \mbox{  in }  \mathcal{B}(Q_T)$ and, for every $k\in\N$, $u_k$ is the strict solution of 
the Cauchy problem  
\begin{equation}\label{Pk}
\!\!\!\!\!\left\{\begin{array}{l}
\displaystyle{u_{kt}-(a(x) u_{kx})_x =\alpha(t,x)u_k+\phi(u_k)\,\quad \mbox{ a.e. \, in } \;Q_T:=\,(0,T)\times(-1,1), }\\ [2.5ex]
\displaystyle{
\begin{cases}
\beta_0 u_k(t,-1)+\beta_1 a(-1)u_{kx}(t,-1)= 0 \qquad\qquad a.e. \;\:t\in (0,T),\\
\gamma_0\, u_k(t,1)\,+\,\gamma_1\, a(1)\,u_{kx}(t,1)= 0
\qquad\qquad\quad\;\;\, a.e. \;\:t\in (0,T),
\end{cases}
 }
~\\ [2.5ex]
\end{array}\right.
\end{equation}
with initial datum $u_k(0,x).$
\end{definition}
\begin{remark}
We note that, due to the definition of the $\mathcal{B}(Q_T)-$norm 
(see Section 3.1), due to the fact that, as $k\rightarrow\infty,$ $u_k\longrightarrow u \mbox{  in }  \mathcal{B}(Q_T),$ from the Definition \ref{strong} we deduce that 
$\,u_k(0,\cdot)\longrightarrow u_0 \mbox{  in }  L^{2}(-1,1).$\\
Moreover, since $\phi$ is locally Lipschitz continuous (see Proposition \ref{loclip}), 
 $$\phi(u_k)\longrightarrow\phi(u), \qquad \text{ in } L^{1+\frac{1}{\vartheta}}(-1,1)
 .\,
$$ 
\end{remark}
Now, we give the following proposition.
\begin{proposition}\label{uni}
Let $T>0,  u_0, v_0\in H^1_a(-1,1)\; (\text{or } u_0, v_0\in L^2(-1,1)).$ $u,v$ are strict (or strong) solutions of the $WDP$ \eqref{Psemilineare}, with initial data $u_0, v_0$ respectively.
Then, we have
\begin{equation}\label{inedc}
\|u-v\|_{\mathcal{B}(Q_T)}\leq C_T\,\|u_0-v_0\|_{L^2(-1,1)},
\end{equation}
where $C_T=e^{(\nu+\|\alpha^+\|_{\infty})T}$ and $\alpha^+$
denotes the positive part of $\alpha$
(\footnote{
$\alpha^+(t,x):=
\max\{\alpha(t,x),0\},\;\forall (t,x)\in Q_T.$ 
}). 
\end{proposition}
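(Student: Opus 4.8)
The plan is to establish the estimate first for \emph{strict} solutions ($u_0,v_0\in H^1_a(-1,1)$, $u,v\in\mathcal{H}(Q_T)$), where the regularity built into $\mathcal{H}(Q_T)$ makes every manipulation rigorous, and then to recover the \emph{strong} case by the density structure of Definition \ref{strong}.

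For the strict case I set $w:=u-v$; subtracting the two copies of \eqref{Psemilineare} shows that $w$ solves $w_t-(a w_x)_x=\alpha w+\big(\phi(u)-\phi(v)\big)$ a.e.\ in $Q_T$, with $w(0,\cdot)=u_0-v_0$ and the same homogeneous Robin conditions. The natural device is the basic energy estimate: multiply by $w$, integrate over $(-1,1)$, and integrate by parts in the diffusion term. Writing $D(t):=\int_{-1}^1 a\,w_x^2\,dx$ and denoting by $B(t)=(aw_x)(1)\,w(1)-(aw_x)(-1)\,w(-1)$ the boundary contribution (legitimate because $a(u_x-v_x)\in H^1(-1,1)\hookrightarrow C^0$, so its traces at $\pm1$ exist), I obtain
\[
\tfrac12\tfrac{d}{dt}\|w(t)\|^2+D(t)-B(t)=\int_{-1}^1\alpha\,w^2\,dx+\int_{-1}^1\big(f(t,x,u)-f(t,x,v)\big)w\,dx.
\]

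The crux is the sign of $B(t)$. At $x=1$ the condition $\gamma_0 w+\gamma_1(aw_x)=0$ gives, when $\gamma_1\neq0$, $(aw_x)(1)\,w(1)=-(\gamma_0/\gamma_1)\,w(1)^2\leq0$ thanks to $\gamma_0\gamma_1\geq0$, while if $\gamma_1=0$ then $\gamma_0\neq0$ forces $w(1)=0$; symmetrically $\beta_0\beta_1\leq0$ renders the $x=-1$ contribution to $B$ nonpositive. Hence $B(t)\leq0$ under $(A.5_{WD})$, so $-B(t)\geq0$ may be discarded. On the right-hand side I estimate $\int_{-1}^1\alpha\,w^2\,dx\leq\|\alpha^+\|_\infty\|w\|^2$ and, using the monotonicity hypothesis \eqref{Remf}, $\int_{-1}^1\big(f(\cdot,u)-f(\cdot,v)\big)(u-v)\,dx\leq\nu\|w\|^2$. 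This produces the differential inequality $\frac{d}{dt}\|w\|^2+2D(t)\leq2(\nu+\|\alpha^+\|_\infty)\|w\|^2$.

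Setting $\lambda:=\nu+\|\alpha^+\|_\infty$ and dropping $2D\geq0$, Grönwall's lemma yields $\|w(t)\|^2\leq e^{2\lambda t}\|u_0-v_0\|^2$, which controls $\sup_{[0,T]}\|w\|^2$; reinserting this bound into the time-integrated inequality controls the dissipation $2\int_0^T D\,dt$ by the same exponential, so that $\|w\|_{\mathcal{B}(Q_T)}\leq C_T\|u_0-v_0\|$ with $C_T=e^{\lambda T}$. Finally, for strong solutions I use the approximating strict solutions $u_k,v_k\in\mathcal{H}(Q_T)$ furnished by Definition \ref{strong}, which satisfy $u_k\to u$, $v_k\to v$ in $\mathcal{B}(Q_T)$ and $u_k(0)\to u_0$, $v_k(0)\to v_0$ in $L^2(-1,1)$; applying the estimate just proved to each pair $(u_k,v_k)$ and letting $k\to\infty$ gives \eqref{inedc} by continuity of the norms, and uniqueness follows by taking $u_0=v_0$. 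I expect the main obstacle to be the boundary-term analysis---checking that the Robin traces combine with the sign conditions $\beta_0\beta_1\leq0$, $\gamma_0\gamma_1\geq0$ to force $-B(t)\geq0$, including the degenerate readings where $a(\pm1)=0$ and where one coefficient vanishes---since this is precisely where the WD Robin setting departs from the Neumann case treated in \cite{F1}.
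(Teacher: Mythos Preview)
Your argument is correct and follows the standard energy method that the paper evidently has in mind: the paper does not give an explicit proof of Proposition~\ref{uni} but refers to Proposition~3.16 of \cite{F1}, and your treatment of the boundary term---showing $B(t)\leq 0$ via the sign conditions $\beta_0\beta_1\leq 0$, $\gamma_0\gamma_1\geq 0$---is precisely the adaptation the paper alludes to when it remarks (in the context of Theorem~\ref{exB}) that ``the sign condition on the coefficients (see $(A.5_{WD})$) assists us''.

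One minor quantitative point: your Gr\"onwall step, carried out cleanly via the integrating factor $e^{-2\lambda t}$, yields $\sup_{[0,T]}\|w\|^2\leq e^{2\lambda T}\|w(0)\|^2$ and $2\int_0^T\!\int a\,w_x^2\leq e^{2\lambda T}\|w(0)\|^2$, hence $\|w\|_{\mathcal{B}(Q_T)}\leq \sqrt{2}\,e^{\lambda T}\|u_0-v_0\|$ rather than exactly $e^{\lambda T}\|u_0-v_0\|$. This discrepancy is harmless for every downstream use (Cauchy-sequence argument in Theorem~\ref{main}, uniqueness), and the stated constant in the paper should likely be read as the exponential type rather than a sharp value.
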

The proof of Proposition \ref{uni} is similar to that of Proposition 3.16 of \cite{F1}.
\subsection{Proof of the main result 
}
Finally, we can prove the main result of this paper, that is, the existence and uniqueness of strong solutions to $WDP$ \eqref{Psemilineare} with initial data in $L^2(-1,1).$ 
\begin{proof}{(of Theorem \ref{main}).}
Let $u_0\in L^2(-1,1).$
There exists $\{u^0_k\}_{k\in\N}\subseteq H^1_a(-1,1)$ such that, as $k\rightarrow\infty,$ $u_k^0\rightarrow u_0$ in $L^2(-1,1).$
For every $k\in\N,$ we consider the problem \eqref{Pk} with initial datum $u_k(0,x)=u^0_k (x),\;x\in(-1,1).$ 
For every $k\in\N,$ through the uniqueness and existence of the strict solution to system \eqref{Pk} (see Theorem \ref{exB}), there exists a unique $u_k\in{\mathcal{H}(Q_T)}$ strict solution to \eqref{Pk}.
Then, we consider the sequence $\{u_k\}_{k\in\N}\subseteq{\mathcal{H}(Q_T)}$ and by direct application of the Proposition \ref{uni} 
 we prove that $\{u_k\}_{k\in\N}$ is a \textit{Cauchy} sequence in the Banach space $\mathcal{B}(Q_T)$. Then, there exists $u\in{\mathcal{B}(Q_T)}$ such that, as $k\rightarrow\infty,$ $u_k\rightarrow u$ in ${\mathcal{B}(Q_T)}$
and $\displaystyle u(0,\cdot)\stackrel{L^2}{=}\lim_{k\rightarrow\infty}u_k(0,\cdot)\stackrel{L^2}{=}u_0.$
So, $u\in {\mathcal{B}(Q_T)}$ is a strong solution.
 The uniqueness of the strong solution to \eqref{Psemilineare} is trivial, applying Proposition \ref{uni}.
 \end{proof}
\section{Some applications and prospectives: global multiplicative controllability}\label{C}
We are interested in studying the nonnegative multiplicative controllability of
 $(\ref{Psemilineare})$  using the \textit{bilinear control} $\alpha (t,x). 
$ Some references for multiplicative controllability are \cite{F1}, \cite{CF1}, \cite{CF2} and \cite{CK}. 
Let us start with the following definition.
\begin{definition}
We say that the system $(\ref{Psemilineare})$  is nonnegatively globally approximately controllable in $L^2(-1,1),$
if for every $
\varepsilon>0$ and for any nonnegative $
 u_0,\,u_d\in 
 L^2(-1,1),$ with $u_0\neq0$ 
there are a 
$ T=T(\varepsilon,u_0,u_d)\geq 0$
and a bilinear control 
$\alpha=
\alpha (t,x),\,\alpha
\in L^\infty(Q_T)$ such that for the corresponding strong solution 
 $u(t,x)$ of $(\ref{Psemilineare})$
we obtain
$$\|u(T,\cdot)-u_d\|_{L^2(-1,1)}\leq \varepsilon\,.$$
\end{definition}
\subsection{Multiplicative controllability for SDPs}
In the following theorem, proved in \cite{F1}, the \textit{nonnegative global approximate
controllability} result is obtained, for the semilinear
$SDP\,(\ref{Psemilineare})$ under the assumptions $(A.1),\,(A.2),\,(A.4), ({A.5}_{WD})$ and $(A.3)$ with $\vartheta\in(1,3)$ instead of $\vartheta\in[1,3)$. 

\begin{theorem}\label{T1}
The semilinear $SDP \,(\ref{Psemilineare})$  is nonnegatively globally approximately controllable in $L^2(-1,1),$ by means of 
bilinear controls $\alpha
.$
Moreover, the corresponding strong solution to the $SDP \,(\ref{Psemilineare})$  remains nonnegative a.e. in $Q_T$.
\end{theorem}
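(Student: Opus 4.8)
{(Proof proposal of Theorem \ref{T1}).}
The plan is to treat the two assertions in turn, reading off the linear skeleton of the problem from the diffusion operator $A_0u:=(au_x)_x$ equipped with the boundary conditions of $(\ref{D(A)})$. By the properties of $(A,D(A))$ recalled above, $A_0$ is self-adjoint, dissipative and has compact resolvent in $L^2(-1,1)$, so its spectrum is discrete, its eigenfunctions form an orthonormal basis, and its principal eigenfunction can be taken strictly positive on $(-1,1)$; the same remains true after adding any bounded real potential.

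First I would establish that nonnegativity is preserved. The bound $(\ref{Superlinearit})$ forces $f(t,x,0)=0$, so $u\equiv0$ is a subsolution. Testing the equation against the negative part $u^-:=\max\{-u,0\}$, integrating by parts (the weighted boundary terms vanish because $a(\pm1)=0$), bounding the reaction term through $(\ref{Remf})$ with $v=0$ and the linear term through $\|\alpha^+\|_\infty$, I obtain
\[
\frac{d}{dt}\,\|u^-(t)\|^2\le 2\big(\nu+\|\alpha^+\|_\infty\big)\,\|u^-(t)\|^2 .
\]
Since $u_0\ge0$ gives $u^-(0)=0$, Gr\"onwall forces $u^-\equiv0$, i.e. $u\ge0$ a.e. in $Q_T$. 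This is first carried out for strict solutions in $\mathcal H(Q_T)$, where the integrations by parts are legitimate, and then transferred to strong solutions through the approximation of Definition \ref{strong} together with the stability estimate $(\ref{inedc})$.

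For the controllability, the idea is to exploit the spatial freedom of $\alpha(t,x)$ by turning a chosen target profile into a steady state. First I would approximate the nonnegative target by $u_d^\delta\in C^2([-1,1])$ with $u_d^\delta\ge\delta>0$ and $\|u_d^\delta-u_d\|<\varepsilon/2$ (for the SDP the weighted Neumann conditions hold automatically, since $a(\pm1)=0$). Neglecting the superlinear term for the moment, the static control
\[
\bar\alpha_\delta(x):=-\frac{(a\,u_{d,x}^\delta)_x}{u_d^\delta}\in L^\infty(-1,1)
\]
makes $u_d^\delta$ the principal eigenfunction of $A_0+\bar\alpha_\delta$ with eigenvalue $0$; being positive, it is simple and dominant, so the associated self-adjoint semigroup aligns every nonnegative datum with the ray $\R\,u_d^\delta$. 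Running the equation with control $\bar\alpha_\delta$ on a long interval therefore steers $u_0\ (\ge0,\ \neq0)$ close to $\kappa\,u_d^\delta$ with $\kappa=\langle u_0,u_d^\delta\rangle/\|u_d^\delta\|^2>0$; a short phase with a spatially constant control $\alpha\equiv c$, which multiplies the state by $\approx e^{c\tau}$, then rescales $\kappa$ to $1$, landing in an $L^2$-neighbourhood of $u_d^\delta$, hence of $u_d$. Taking $T$ to be the total length of these phases closes the argument.

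The hard part will be twofold. First, because $a$ degenerates at $x=\pm1$, none of the spectral and order-preserving facts may be quoted from the uniformly parabolic theory: the positivity and simplicity of the ground state, the positivity preservation and asymptotic alignment of the degenerate semigroup, and the admissibility $\bar\alpha_\delta\in L^\infty$ must be verified directly in the weighted spaces $H^1_a,H^2_a$, tracking the behaviour of $u_d^\delta$ and $a\,u_{d,x}^\delta$ near the endpoints. Second, the superlinear term $f$ must be prevented from spoiling these linear maneuvers; here the superlinearity $\vartheta\in(1,3)$ in $(\ref{Superlinearit})$ is essential, since contracting the solution to small size beforehand (a large negative constant control $\alpha\equiv-M$ makes $\frac{d}{dt}\|u\|^2\le2(\nu-M)\|u\|^2$ decay arbitrarily fast) renders $f$, of size $|u|^\vartheta$, negligible against the linear dynamics, and the continuous-dependence bound $(\ref{inedc})$ lets the accumulated errors be absorbed into $\varepsilon$. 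Carefully concatenating the phases and estimating these perturbations is precisely the lengthy computation of \cite{F1}.
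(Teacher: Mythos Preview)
The paper does not actually prove Theorem~\ref{T1}: it is quoted from~\cite{F1} and no argument is given here, so there is no ``paper's own proof'' to compare against. Your outline is consistent with the Khapalov-type strategy used in that line of work (cf.~\cite{CK}, \cite{F1}): preserve nonnegativity by testing against $u^-$, shrink the state so that the superlinear reaction (with $\vartheta>1$) becomes negligible, then drive the linearised dynamics toward a smooth strictly positive approximation of the target by choosing a static potential that makes it the ground state, and finally correct the amplitude by a constant-in-space control. You yourself identify the delicate points (spectral and positivity properties of the degenerate operator in $H^1_a$, control of the nonlinear perturbation via~$(\ref{inedc})$) and correctly defer the verification of these to~\cite{F1}. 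Since the paper does exactly the same --- state the theorem and cite~\cite{F1} --- your proposal is in line with it; just be aware that the sketch as written is not a self-contained proof, and the genuine work (quantitative spectral gap for $A_0+\bar\alpha_\delta$ in the degenerate setting, and the careful concatenation/error analysis) lives entirely in the cited reference.
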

\subsection{Multiplicative controllability for WDPs}
Thanks to the well-posedness result for WDPs obtained in this paper, we will be able to investigate the possibility of extending Theorem \ref{T1} from SDPs to WDPs (see 
\cite{F3}).

\medskip
Received xxxx 20xx; revised xxxx 20xx.
\medskip

\end{document}